\theoremstyle{plain} 
\newtheorem{df}{Definition}[section]
\newtheorem{thm}[df]{Theorem}
\newtheorem{cor}[df]{Corollary}
\newtheorem{lem}[df]{Lemma}
\theoremstyle{definition}
\theoremstyle{remark}
\newcommand{\N}{\mathbb{N}}
\newcommand{\R}{\mathbb{R}}
\newcommand{\Z}{\mathbb{Z}}
\newcommand{\supp}{\mathrm{supp}}
\newcommand{\E}{\mathbb{E}}
\newcommand{\var}{\mathrm{Var}}
\newcommand{\1}[1]{\mathbf{1}_{#1}}
\newcommand{\half}{\frac{1}{2}}
\renewcommand{\tilde}{\widetilde}
\newcommand{\hyphen}{\textrm{-}}
\newcommand{\as}{\textrm{a.s.}}
\begin{document}
\title[First passage percolation]
{Upper bounds on the non-random fluctuations in first passage percolation with low moment conditions}
\author[N. Kubota]{Naoki KUBOTA}
\address{College of Science and Technology, Nihon University, Tokyo 101-8308, JAPAN}
\email{kubota.naoki08@nihon-u.ac.jp}
\keywords{First passage percolation, convergence rate}
\subjclass[2010]{60K35, 82B20}

\begin{abstract}
We consider first passage percolation with i.i.d.\,weights on edges of
the $d$-dimensional cubic lattice $\Z^d$.
Under the assumptions that
a weight is equal to zero with probability smaller than the critical probability
of bond percolation in $\Z^d$,
and has the $\alpha$-th moment for some $\alpha>1$,
we investigate upper bounds on the so-called non-random fluctuations of the model.
In addition, we give an application of our result to a lower bound for variance
of the first passage percolation in the case where the limit shape has flat edges.
\end{abstract}

\maketitle

\section{Introduction}\label{sec:intro}

\subsection{The model and the main result}
First passage percolation was originally introduced in 1965 by Hammersley and Welsh~\cite{HamWel65}.
In this model, we place i.i.d.~random weights on edges of the $d$-dimensional cubic lattice $\Z^d$,
and consider the minimum (random) traveling time from a subset of $\Z^d$ to another one.
Let $\mathcal{E}$ be the edge set of $\Z^d$
and consider the measurable space $\Omega:=[0,\infty)^{\mathcal{E}}$
endowed with the canonical $\sigma$-field $\mathcal{G}$.
Moreover, for a given probability measure $\nu$ on $[0,\infty)$,
let $P:=\nu^{\otimes \mathcal{E}}$ be the corresponding product measure
on $(\Omega,\mathcal{G})$.
For a nearest neighbor path $\gamma=(\gamma_0,\dots,\gamma_l)$ on $\Z^d$,
we define the \textit{passage time} of $\gamma$ as
\begin{align*}
 T(\gamma ):=\sum_{i=0}^{l-1} \omega (\{ \gamma_i ,\gamma_{i+1} \})
\end{align*}
with the convention $\sum_{i=0}^{-1} \omega (\{ \gamma_i ,\gamma_{i+1} \}):=0$.
Here we use the notation $\{ x,y \}$ to denote the edge of $\Z^d$ with endpoints $x$ and $y$.
For any two subsets $A$ and $B$ of $\Z^d$ we define the \textit{first passage time} from $A$ to $B$ as
\begin{align*}
 T(A,B):=\inf \biggl\{ T(\gamma );
 \begin{minipage}{6.8cm}
  $\gamma$ is a nearest neighbor path on $\Z^d$\\
  from some site in $A$ to some site in $B$
 \end{minipage}
 \biggr\}.
\end{align*}
In particular, write $T(x,y)=T(\{ x \}, \{ y \})$ for $x,y \in \Z^d$.
We may extend the first passage time over $\R^d$.
For $x \in \R^d$, 
let $[x]$ be a lattice point such that
\begin{align*}
 \|[x]-x\|_\infty =\min \bigl\{ \| v-x \|_\infty ;v \in \Z^d \bigr\} \leq \half,
\end{align*}
where $\| \cdot \|_\infty$ is the $\ell_\infty$-norm.
If $x$ and $y$ are in $\R^d$,
we rewrite $T(x,y):=T([x],[y])$.
To shorten notation, given a vector $\xi \in \R^d$,
the first passage time from the origin $0$ to $n\xi$ is denoted by
\begin{align*}
 a_{0,n}(\xi ):=T(0,n\xi ).
\end{align*}
It is well known from the standard subadditive ergodic theorem that
if $E[\omega(e)]<\infty$, then
for any $\xi \in \Z^d$, $P \hyphen \as$ and in $L^1$,
\begin{align}
 \mu(\xi)
 =\lim_{n \to \infty}\frac{1}{n}a_{0,n}(\xi )
 = \lim_{n \to \infty}\frac{1}{n}E[a_{0,n}(\xi )]
 = \inf_{n \geq 1}\frac{1}{n}E[a_{0,n}(\xi )].
 \label{eq:timeconst}
\end{align}
From \cite[pages~158-160]{Kes86_lect}, such a limit also exists for a general $\xi \in \R^d$,
and we call $\mu(\xi)$ the \textit{time constant} for $\xi \in \R^d$.

In this paper, we study rates of convergence to the time constant in the first passage percolation.
Kesten~\cite[(3.2), page~317]{Kes93} derived a bound on the so-called \emph{non-random fluctuations}
in first passage percolation, i.e.,
there exists a constant $C>0$ such that
\begin{align}\label{eq:kesnon}
 E[a_{0,n}(\xi )] -n\mu (\xi ) \leq Cn^{1-1/(2d+4)}(\log n)^{1/(d+2)},\qquad \xi \in \R^d,
\end{align}
under the assumptions that
\begin{align}\label{eq:p_c}
 \nu(\{ 0 \}) <p_c
\end{align}
where $p_c$ is the critical probability of bond percolation in $\Z^d$, and
\begin{align}\label{eq:exp_ass}
 \E[e^{\alpha \omega(e)}]<\infty \text{ for some } \alpha>0.
\end{align}
Alexander~\cite{Ale97} improved \eqref{eq:kesnon} by a different method.
On the other hand, Zhang~\cite{Zha10} studied the same problem under a weaker moment condition
than \eqref{eq:exp_ass}:
If
\begin{align}\label{eq:lowmnt}
 m_{\nu,\alpha}:=E[\omega(e)^\alpha]<\infty \text{ for some } \alpha>1,
\end{align}
then there exists a constant $C>0$ such that
for each coordinate direction $\xi'$ of $\R^d$,
\begin{align} \label{eq:zha10}
 E[a_{0,n}(\xi')] -n\mu (\xi') \leq Cn^{1/2}(\log n)^7.
\end{align}
For the proof of \eqref{eq:zha10}, he used symmetry properties of $\Z^d$
with respect to the coordinate axis.
Therefore, his approach does not work for any direction except coordinate axis,
and we need a new method.
The next theorem is our main result.

\begin{thm}\label{thm:rate}
Assume \eqref{eq:p_c} and \eqref{eq:lowmnt}.
Then, there exists a constant $C>0$ such that
for all $\ell_2$-unit vector $\xi \in \R^d$,
\begin{align}
 E[a_{0,n}(\xi )] -n\mu (\xi ) \leq Cn^{1-1/(6d+12)}(\log n)^{1/3}.
 \label{eq:rate}
\end{align}
\end{thm}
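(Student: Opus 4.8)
The plan is to follow the route of \cite{Kes93} (compare also \cite{Ale97}): first obtain a concentration estimate for $a_{0,n}(\xi)$ about its mean that holds under the weak moment hypothesis (A2) and is uniform in the $\ell_{2}$-unit vector $\xi$, and then feed that estimate into the Kesten--Alexander scheme which upgrades concentration of the point-to-point time into an upper bound on $E[a_{0,n}(\xi)]-n\mu(\xi)$. What makes the concentration usable is the a priori lower bound $E[a_{0,n}(\xi)]\ge n\mu(\xi)-O(1)$ coming from \eqref{eq:timeconst} (extended to real directions via \cite{CoxDur81}): it pins the lower tail of $a_{0,n}(\xi)$ below $n\mu(\xi)$, so that controlling that tail controls the excess $E[a_{0,n}(\xi)]-n\mu(\xi)$, which is the quantity appearing in \eqref{eq:rate}.

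For the concentration input I would fix a truncation level $M=M(n)$, replace each $\omega(e)$ by $\omega(e)\wedge M$, and run a martingale (Azuma--Hoeffding type) argument along the edge filtration: resampling one edge changes the truncated passage time $\bar a_{0,n}(\xi)$ by at most $M$, giving a Gaussian-type bound $P(|\bar a_{0,n}(\xi)-E\bar a_{0,n}(\xi)|\ge t)\le 2\exp(-ct^{2}/(nM^{2}))$ in a suitable range of $t$. The discrepancy $a_{0,n}(\xi)-\bar a_{0,n}(\xi)$ is bounded by the overshoot $\sum_{e}(\omega(e)-M)^{+}$ over the edges a near-geodesic can use; assumption (A1) makes that number $O(n)$ with high probability, while (A2) gives $E[(\omega(e)-M)^{+}]\le m_{\nu,\alpha}M^{1-\alpha}$, so a first-moment/Markov step turns the overshoot into a polynomially small contribution. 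Combining the two and optimising $M$ yields a two-sided concentration inequality whose right-hand side is ``exponentially small in a power of $t/\sqrt{n}$, plus polynomially small'' --- this is essentially \cite{Zha10}, carried out there for the coordinate directions. To make it uniform over all $\xi$ one uses that $\mu$, being a norm, is uniformly continuous on $S^{d-1}$, and unions over an $n^{-c}$-net of $S^{d-1}$; the net has $\lesssim n^{c(d-1)}$ points, costing only an $O(\log n)$ loss in the exponents.

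To derive the rate, set $\Delta_{n}(\xi):=E[a_{0,n}(\xi)]-n\mu(\xi)$. The heart of the matter is an \emph{approximate superadditivity}: if $\pi$ is a geodesic from $0$ to $[2n\xi]$ and $w$ its first crossing of the hyperplane through $n\xi$ orthogonal to $\xi$, then $a_{0,2n}(\xi)\ge T(0,w)+T(w,[2n\xi])$, and replacing $T(0,w)$ by $T(0,[n\xi])$ and $T(w,[2n\xi])$ by a translate of $T(0,[n\xi])$ costs at most the passage time over a box of radius $|w-n\xi|$ (the transversal displacement of $\pi$) plus a Lipschitz-of-$\mu$ term of the same order. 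Using (A1) in the form ``long light paths are exponentially unlikely'' to see that a geodesic at scale $2n$ has length $O(n)$ and cannot make a transversal excursion of size $L$ without an extra cost of order $L^{2}/n$, and using the concentration bound to absorb the dependence of $w$ on the environment, one reaches $E[a_{0,2n}(\xi)]\ge 2E[a_{0,n}(\xi)]-\psi(n)$, i.e.\ $\Delta_{2n}(\xi)\ge 2\Delta_{n}(\xi)-\psi(n)$, for an error $\psi$ that is a power of $n$ times a $(\log n)^{1/3}$-type factor, uniformly in $\xi$. Dividing by $2n$, telescoping over dyadic scales, and using $\Delta_{2^{k}n}(\xi)/(2^{k}n)\to 0$ as $k\to\infty$ (again by \eqref{eq:timeconst} and \cite{CoxDur81}) gives
\[
 \frac{\Delta_{n}(\xi)}{n}\;\le\;\sum_{k\ge 0}\frac{\psi(2^{k}n)}{2^{k+1}n}\;\le\;C_{1}\,\frac{\psi(n)}{n},
\]
the last step because $\psi$ grows polynomially; hence $\Delta_{n}(\xi)\le C_{1}\psi(n)$. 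The exponent $1-1/(6d+12)$ and the factor $(\log n)^{1/3}$ emerge from optimising, inside the bound for $\psi$, the three free parameters: the truncation level $M$, the longitudinal block length versus the transversal scale $L$, and the mesh of the direction net (the power of $d$ entering through the $\sim n^{c(d-1)}$ size of that net and the $d$-dimensional volume of the slab in which the geodesic lives).

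The step I expect to be the main obstacle is the approximate superadditivity under (A2): with only an $\alpha$-th moment the available concentration is far weaker than Kesten's sub-exponential bound, so both the transversal-fluctuation estimate and the control of $E[T(w,\cdot)]$ --- where $w$ is a geodesic-dependent, hence non-independent, point on a hyperplane --- must be done with almost no slack in the tails. This is exactly where the loss from the exponential-moment rate $n^{1/2}\log n$ of \cite{Kes93,Ale97} down to $n^{1-1/(6d+12)}(\log n)^{1/3}$ is paid, and getting the three-parameter bookkeeping to close is the delicate part.
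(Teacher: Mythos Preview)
Your proposal diverges from the paper's route and, more importantly, rests on a circularity. The engine of your argument is the approximate superadditivity $E[a_{0,2n}(\xi)]\ge 2E[a_{0,n}(\xi)]-\psi(n)$, obtained by cutting a geodesic at its crossing $w$ of the mid-hyperplane; the error $\psi(n)$ is then governed by the transversal displacement $|w-n\xi|$. You bound that displacement by asserting that a geodesic ``cannot make a transversal excursion of size $L$ without an extra cost of order $L^{2}/n$''. That statement is \emph{not} a consequence of (A1): it encodes uniform positive curvature of the boundary of $\{\mu\le 1\}$, which is open, and the rigorous transversal-fluctuation bounds that do exist are themselves \emph{derived from} a rate-of-convergence estimate of the type \eqref{eq:rate}. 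Without an a priori bound on $|w-n\xi|$ one only has $|w-n\xi|=O(n)$, whence $\psi(n)$ is of order $n$ and your telescoping sum $\sum_{k}\psi(2^{k}n)/(2^{k+1}n)$ diverges. The dependence of $w$ on the full environment compounds the difficulty: $T(0,w)$ and $T(w,[2n\xi])$ are not independent of $w$, so ``absorbing'' this with concentration at scale $n$ again needs the very rate you are proving.

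The paper sidesteps all of this by following Kesten's skeleton scheme rather than a dyadic midpoint recursion. One fixes a mesoscopic scale $M$ and records, along any path $\gamma$ from $0$ to $[ln\xi]$, the successive exit points $a_{0}=0,a_{1},\dots,a_{Q}$ from $\ell_{\infty}$-balls of radius $M$; each increment $a_{i}-a_{i-1}$ is one of the fixed vectors $U_{k}$ with $|U_{k}|_{\infty}=M$. The gap is measured by $\Lambda(M,n):=\min\bigl\{\sum_{k}p(k)E[T(0,U_{k})]\bigr\}-n\mu(\xi)$ (minimum over integer $p(k)$ with $|\sum_{k}p(k)U_{k}-n\xi|_{\infty}\le M$), and the concentration input is applied not to $a_{0,n}(\xi)$ but to each block variable $T(0,U_{k})$ at the much smaller scale $M$, via Zhang's cluster-based truncation $\sigma$ (not the naive $\omega(e)\wedge M$) and the resulting bound \eqref{eq:zhang2}. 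A union bound over the $e^{O((ln/M)\log M)}$ possible skeletons, combined with Kesten's inequality $l\Lambda(M,n)-ClM^{1/d}n^{(d-1)/d}\le\Lambda(M,ln)\le Cln$, forces $\Lambda(M,n)\le C_{2}nM^{-(1-d\delta)}$; choosing $\delta=1/(d+4)$ and $M=\lfloor n^{1/(d\delta+1)}\rfloor$ yields \eqref{eq:rate}. No transversal-fluctuation estimate and no direction net are used: uniformity in $\xi$ is automatic because the family $\{U_{k}\}$ is fixed and only the linear constraint depends on $\xi$.
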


\subsection{Application of Theorem~\ref{thm:rate}}\label{sec:appli}
In this subsection, we state an application of Theorem~\ref{thm:rate}.
Bound \eqref{eq:rate} may not be optimal,
but it is very useful that for all direction $\xi$
we can uniformly take the exponent of the convergence rate strictly smaller than $1$.
Auffinger and Damron~\cite[Theorem~2.5]{AufDam13} established that
the variance of the first passage time has a lower bound with a logarithmic order
in the case where the limit shape has flat edges.
For Theorem~2.5 of \cite{AufDam13},
they require not only \eqref{eq:lowmnt} with $\alpha=2$ but also a bound on the non-random fluctuations
at that time.
Thanks to Theorem~\ref{thm:rate}, we can check their condition
whereas \eqref{eq:lowmnt} holds for $\alpha=2$.

Let $d=2$ and
write $\supp (\nu')$ for the support of the probability measure $\nu'$.
Moreover, let $\vec{p}_c$ be the critical parameter for oriented percolation on $\Z^2$.
Furthermore, denote by $\theta_q$ the unique angle such that the line segment connecting $0$
and the point $N_q:=(1/2+\alpha_q/\sqrt{2},1/2-\alpha_q/\sqrt{2}) \in \R^2$
has angle $\theta_q$ with the $x$-axis,
where $\alpha_q$ is the asymptotic speed of oriented percolation with parameter $q$.
For details of oriented percolation, we refer the reader to \cite{Dur84}.
For $q \geq \vec{p}_c$,
$\mathcal{M}_q$ is defined by the set of probability measures $\nu'$
satisfying conditions
\begin{enumerate}
\item[\bf (C1)] $\supp (\nu') \subset [1,\infty)$,
\item[\bf (C2)] $\nu' (\{ 1 \})=q$.
\end{enumerate}
Note that if $\nu \in \mathcal{M}_q$ (in particular, (C1) holds for $\nu$),
then we have $\nu (\{ 0 \})=0<p_c$, i.e., \eqref{eq:p_c} is satisfied.

We now assume that \eqref{eq:lowmnt} holds for $\alpha=2$ and the law $\nu$ satisfies one of conditions
\begin{itemize}
 \item[\bf (a)]
  $\inf \supp (\nu)=0$ and $\nu(\{ 0 \})<p_c$,
 \item[\bf (b)]
  $\lambda:= \inf \supp (\nu)>0$ and $\nu(\{ \lambda \})<\vec{p}_c$.
\end{itemize}
In \cite[Theorem~2]{NewPiz95}, under the above assumptions Newman and Piza showed that
there is a constant $C>0$ such that for all $n \geq 1$ and $\theta \in [0,2\pi)$,
\begin{align}\label{eq:aufdam_cor}
 \var (T(0,n\xi_\theta )) \geq C\log n,
\end{align}
where $\xi_\theta:=(\cos \theta, \sin \theta) \in \R^2$.
This means that the variance of the first passage time diverges as $n \to \infty$ in these cases.
On page~980 of \cite{NewPiz95},
they also state that the variance does not diverge for $\theta \in (\theta_q,\pi/2-\theta_q)$
in the case $\nu \in \mathcal{M}_q$ with $q>\vec{p}_c$.
We are now concerned with the divergence of $\var (T(0,n\xi_\theta ))$
for $\theta \in [0,\theta_q)$ in the same situation.
If $\xi_\theta$ is a coordinate direction, then
Zhang~\cite[Theorem~2]{Zha08} proved \eqref{eq:aufdam_cor} under assumption \eqref{eq:exp_ass}.
After that, Auffinger and Damron \cite[Theorem~2.5]{AufDam13} improved it as follows.

\begin{thm}[Auffinger and Damron] \label{thm:aufdam}
For a given $q \in [\vec{p}_c,1)$,
let $\nu \in \mathcal{M}_q$
and $\theta \in [0,\theta_q)$.
Suppose that \eqref{eq:lowmnt} holds with $\alpha=2$ and
there exists $\beta <1$ such that for all large $n$,
\begin{align}
 E[T(0,n\xi_\theta )]<n\mu (\xi_\theta )+n^\beta,
 \label{eq:aufdam_rate}
\end{align}
where $\xi_\theta:=(\cos \theta, \sin \theta) \in \R^2$.
Then, there exists a positive constant $C=C(\theta)$ such that
\eqref{eq:aufdam_cor} holds for all $n$.
\end{thm}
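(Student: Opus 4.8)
The plan is to run the conditional-variance (martingale) scheme behind the logarithmic variance lower bound of Newman and Piza~\cite{NewPiz95}, in the form later used by Zhang~\cite{Zha08}, but to repair the one ingredient of those works that fails here. Those arguments require $\nu(\{\lambda\})<\vec{p}_c$ with $\lambda=\inf\supp(\nu)$, whereas $\nu\in\mathcal{M}_q$ satisfies $\nu(\{1\})=q\geq\vec{p}_c$, so minimal-weight oriented paths percolate; the replacement is that $\theta<\theta_q$ puts $\xi_\theta$ strictly outside the oriented percolation cone of speed $\alpha_q$, so that no geodesic can ``ride the weight-$1$ cluster'' all the way to $n\xi_\theta$ and is forced to make genuine local choices. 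The rate bound~\eqref{eq:aufdam_rate} enters to confine the geodesic near the ray $\R_+\xi_\theta$ and to keep the accumulated errors summable. Write $T_n:=T(0,n\xi_\theta)$ and $\sigma_n^2:=\var(T_n)$.

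The first step is a dyadic martingale decomposition. For $k\geq1$ let $\mathcal{F}_k$ be the $\sigma$-field generated by the weights of the edges lying within $\ell_2$-distance $2^k$ of the origin, let $A_k$ be the set of edges revealed in passing from $\mathcal{F}_{k-1}$ to $\mathcal{F}_k$, and set $K:=\lceil\log_2 n\rceil$. The law of total variance and orthogonality of martingale increments give $\sigma_n^2\geq\var\big(E[T_n\mid\mathcal{F}_K]\big)\geq\sum_{k=1}^{K}E\big[\var\big(E[T_n\mid\mathcal{F}_k]\mid\mathcal{F}_{k-1}\big)\big]$. Hence it suffices to produce a constant $c_1>0$ and an interval $\mathcal{K}$ of order $\log n$ consecutive scales---I would take those $k$ with $2^k\in[n^{1-\varepsilon},n]$---such that $E\big[\var\big(E[T_n\mid\mathcal{F}_k]\mid\mathcal{F}_{k-1}\big)\big]\geq c_1$ for all $k\in\mathcal{K}$, since this already gives $\sigma_n^2\geq c_1|\mathcal{K}|\geq C\log n$.

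The core is this per-scale bound. The quantity $E\big[\var\big(E[T_n\mid\mathcal{F}_k]\mid\mathcal{F}_{k-1}\big)\big]$ measures how much the weights in the dyadic shell $A_k$ still move $T_n$ once the weights within radius $2^{k-1}$ are known. I would argue: (i) by~\eqref{eq:aufdam_rate} at scale of order $2^k$, together with the behaviour of $\mu$ in the direction $\xi_\theta$, any path from radius $2^{k-1}$ to $[n\xi_\theta]$ that is competitive for $T_n$ must, with probability bounded below, traverse a sub-annulus of $A_k$ inside a thin tube about $\R_+\xi_\theta$, over a range of distances of order $2^{k-1}$ (here the dyadic thickness of $A_k$ is essential, since the tube width is only a small power of $n$); (ii) every direction met in the tube lies outside the percolation cone, so by the oriented-percolation shape theorem (with speed $\alpha_q$) there are no long directed weight-$1$ paths there, and consequently---conditionally on the weights within radius $2^{k-1}$, with conditional probability at least a fixed $p_0>0$---there is a bounded box inside $A_k$ in which every near-optimal path must use an edge of weight $>1$ or make a backward step; (iii) hence some edge $e\in A_k$ is $\delta_0$-influential for $T_n$ (for a fixed $\delta_0>0$) on that event, and a van den Berg--Kesten-type square-root trick applied to $\omega(e)$---which equals $1$ with probability $q$ and exceeds $1$ by a fixed amount with probability bounded below---upgrades this to $\var\big(E[T_n\mid\mathcal{F}_k]\mid\mathcal{F}_{k-1}\big)\geq c_1$ there; averaging over $\mathcal{F}_{k-1}$ finishes the per-scale bound. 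The error produced by step~(i) is of order $n^{\beta-1}$ up to logarithms, hence summable over $\mathcal{K}$ because $\beta<1$, and a short interpolation extends the estimate from the dyadic subsequence to all large $n$.

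The step I expect to be the main obstacle is exactly this per-scale estimate, uniform in $k\in\mathcal{K}$ and uniform over the conditioning on the inner weights: one has to rule out that $T_n$ is asymptotically insensitive to the weights of a given dyadic shell, which amounts to promoting the percolation-cone geometry to a quantitative ``local competition'' statement for first-passage geodesics, not merely for oriented paths. Moreover, since only the second moment~(A2) is assumed, everything must be done with $L^2$ tools, without the exponential concentration inequalities that trivialise this step under Zhang's exponential-moment hypothesis---which is precisely why the rate bound~\eqref{eq:aufdam_rate} is imposed as a hypothesis here (and, in the present paper, is furnished by Theorem~\ref{thm:rate}).
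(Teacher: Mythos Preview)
The paper does not prove Theorem~\ref{thm:aufdam}. It is stated there as a result of Auffinger and Damron, with the attribution ``Auffinger and Damron \cite{AufDam13} extend this result as follows,'' and no proof is supplied; the theorem is used only as a black box to deduce the corollary from Theorem~\ref{thm:rate}. There is therefore no ``paper's own proof'' to compare your proposal against.

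As for the proposal itself: your outline follows the broad Newman--Piza martingale-increment scheme and correctly identifies the new difficulty (that $\nu(\{1\})\geq\vec{p}_c$) and the geometric mechanism that repairs it (the direction $\xi_\theta$ lies outside the oriented-percolation cone, forcing geodesics to use non-minimal edges). This is indeed the philosophy of \cite{AufDam13}. However, the sketch is not yet a proof: the per-scale bound in step~(ii)--(iii) is asserted rather than established, the ``van den Berg--Kesten-type square-root trick'' is invoked without a precise statement, and the role of the hypothesis~\eqref{eq:aufdam_rate} in confining the geodesic (step~(i)) is described only heuristically. If you wish to supply an actual proof, you should consult \cite{AufDam13} directly; within the present paper, the correct action is simply to cite that reference, as the author does.
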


If we assume \eqref{eq:exp_ass}, then \eqref{eq:kesnon} yields
\eqref{eq:aufdam_rate} for all angles $\theta$,
and \eqref{eq:aufdam_cor} holds for all $\theta \in [0,\theta_q)$.
Under the assumption of Theorem~\ref{thm:aufdam}, \eqref{eq:zha10} only guarantees
the validity of \eqref{eq:aufdam_rate} for each coordinate direction $\xi_\theta$.
We use Theorem~\ref{thm:rate} to obtain \eqref{eq:aufdam_rate} for all angles $\theta$.
With these observations, the whole picture of divergence for $\var(T(0,n\xi_\theta))$
is completed under \eqref{eq:lowmnt} with $\alpha=2$.

\begin{cor}
For a given $q \in [\vec{p}_c,1)$,
let $\nu \in \mathcal{M}_q$ and $\theta \in [0,\theta_q)$.
Suppose that \eqref{eq:lowmnt} holds with $\alpha=2$.
Then, \eqref{eq:aufdam_cor} holds for all $n$.
\end{cor}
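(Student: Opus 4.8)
The plan is to derive the corollary directly from Theorem~\ref{thm:rate} together with the Auffinger--Damron criterion of Theorem~\ref{thm:aufdam}; no new ideas are required, the work is purely in checking hypotheses and a trivial estimate. First I would verify that Theorem~\ref{thm:rate} applies. Since $\nu \in \mathcal{M}_q$, condition (B1) gives $\supp(\nu) \subset [1,\infty)$, hence $\nu(\{0\}) = 0 < p_c$, so (A1) holds, while (A2) with $\alpha = 2$ is assumed. Moreover $\xi_\theta = (\cos\theta,\sin\theta)$ is an $\ell_2$-unit vector of $\R^2$, so Theorem~\ref{thm:rate} is available with $d = 2$. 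Since $6d+12 = 24$ in this case, it yields, for all large $n$,
\begin{align*}
 E[T(0,n\xi_\theta)] = E[a_{0,n}(\xi_\theta)]
 \leq n\mu(\xi_\theta) + C_1 n^{23/24}(\log n)^{1/3}.
\end{align*}

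Next I would absorb the logarithmic factor into a slightly larger power of $n$: because $(\log n)^{1/3} = o(n^{1/48})$, for all large $n$ we have $C_1 n^{23/24}(\log n)^{1/3} \leq n^{23/24 + 1/48} = n^{47/48}$, so that the mean bound \eqref{eq:aufdam_rate} holds with $\beta = 47/48 < 1$. At this point every hypothesis of Theorem~\ref{thm:aufdam} is in force: $q \in [\vec{p}_c,1)$, $\nu \in \mathcal{M}_q$, $\theta \in [0,\theta_q)$, assumption (A2) at $\alpha = 2$, and \eqref{eq:aufdam_rate}. Invoking Theorem~\ref{thm:aufdam} then produces a constant $C = C(\theta) > 0$ with $\var(T(0,n\xi_\theta)) \geq C\log n$ for all $n$, which is precisely \eqref{eq:aufdam_cor}.

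Since the argument is just a two-step chaining of results established earlier in the paper, there is essentially no obstacle. The only point deserving a sentence of care is the quantifier gap: Theorem~\ref{thm:rate} supplies its bound only for all large $n$, whereas \eqref{eq:aufdam_cor} is asserted for every $n$. This, however, is already absorbed into the conclusion of Theorem~\ref{thm:aufdam}, since for the finitely many remaining small values of $n$ one has $\var(T(0,n\xi_\theta)) > 0$ (the endpoints $0$ and $[n\xi_\theta]$ are distinct for $n \geq 1$ when $\theta < \theta_q < \pi/4$, and $\nu$ is nondegenerate because $\nu(\{1\}) = q < 1$ with $\supp(\nu)\subset[1,\infty)$), so the constant $C$ may simply be decreased to cover them.
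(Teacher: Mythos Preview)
Your proof is correct and follows essentially the same approach as the paper: verify that (B1) forces $\nu(\{0\})=0<p_c$ so that Theorem~\ref{thm:rate} applies, absorb the logarithmic factor to obtain \eqref{eq:aufdam_rate} with $\beta=47/48$, and then invoke Theorem~\ref{thm:aufdam}. The paper gives this argument in the paragraph immediately preceding the Corollary rather than as a separate proof, and your added remark on the ``all large $n$'' versus ``all $n$'' quantifier is a nice clarification that the paper leaves implicit in the statement of Theorem~\ref{thm:aufdam}.
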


\subsection{Organization of the paper}\label{subsect:org}
Let us describe how the present article is organized.
In Section~\ref{sec:prelim}, we introduce truncated weights following the method of Zhang~\cite{Zha10}.
Since the argument of Sections~2 and 3 in \cite{Zha10} contains an oversight,
we will present one of ways to fix this (see Lemma~\ref{lem:zhang} below).
In addition, we give a method to compare the expectation of the first passage time for the truncated weights
with that for the original weights.

In Section~\ref{sec:pf_rate}, we give the proof of Theorem~\ref{thm:rate}.
To do this, we improve the approach taken in \cite[Section~3, page~317]{Kes93}
under our assumption \eqref{eq:lowmnt}.
This section is divided into two subsections.
In Subsection~\ref{subsect:kesap}, for the reader's convenience,
we explain the outline of Kesten's approach under \eqref{eq:exp_ass},
and clarify differences between his and ours.
In Subsection~\ref{subsect:fix}, we present a new method
to derive the convergence rate for all directions under low moment conditions.

In the following sections, $C_i$, $i=1,2,\dots$, are always positive constants
depending on $d$, $\nu$ and $\alpha$.

\section{Preliminaries}\label{sec:prelim}
In this section,
we shall introduce truncated weights, following basically the strategy taken in \cite{Zha10}.
By assumptions \eqref{eq:p_c} and \eqref{eq:lowmnt}, we can take $\kappa \in (0,1)$ such that
\begin{align*}
 P(\omega (e)<\kappa ) \vee P(\omega (e)>\kappa^{-1})<p_c.
\end{align*}
From now on, we fix $\kappa$ as above.
Then, an edge $e \in \mathcal{E}$ is said to be \textit{bad}
if $\omega(e)<\kappa$, and a site $x \in \Z^d$ is said to be \textit{unhealthy}
if some weights of $2d$ adjacent edges of $x$ are larger than $\kappa^{-1}$.
Let us now introduce two connectivities of paths on $\Z^d$.
We say that a path $\gamma=(\gamma_0,\dots,\gamma_i)$ is $\Z^d$- or $*$-connected
if for all $i \in [0,l-1]$, $\|\gamma_{i+1}-\gamma_i\|_2$ or $\|\gamma_{i+1}-\gamma_i\|_\infty$ equals $1$,
respectively.
Here$\| \cdot \|_2$ is the $\ell_2$-norm.
A $\Z^d$-connected path $\gamma=(\gamma_0,\dots,\gamma_i)$ is called bad
if each edge $\{ \gamma_i,\gamma_{i+1} \}$ is bad.
Furthermore, a $*$-connected path $\gamma=(\gamma_0,\dots,\gamma_i)$ is called unhealthy
if each site $\gamma_i$ is unhealthy.
Let $\mathcal{C}_- (x)$ be a bad $\Z^d$-connected cluster containing a site $x$,
i.e., the set of all sites connected to $x$ by a bad $\Z^d$-connected path.
We also denote by $\mathcal{C}_+ (x)$ an unhealthy $*$-connected cluster containing a site $x$,
i.e., the set of all sites connected to $x$ by an unhealthy $*$-connected path.

Fix $\delta<1/d$.
We now define a truncated weight $\sigma (e)$ as follows.
If one of the following conditions (1)--(3) holds,
then we set $\sigma(e):=\omega (e)$, otherwise $\sigma(e):=1$:
\begin{enumerate}
 \item $\kappa \leq \omega(e) \leq \kappa^{-1}$,
 \item $\omega(e)<\kappa$, and $e$ is connected to a bad $\Z^d$-connected cluster
       with less than $n^\delta$ vertices,
 \item $\omega (e)>\kappa^{-1}$, and $e$ is connected to an unhealthy $*$-connected cluster
       with less than $n^\delta$ vertices.
\end{enumerate}
Then, let $T_\sigma$ be the first passage time on the truncated weights $\sigma$.
Moreover, for $x \in \R^d$ and $n \geq 1$, let
\begin{align*}
 D_n(x):=x+\left[-3^d\kappa^{-1}n^\delta ,3^d\kappa^{-1}n^\delta \right]^d.
\end{align*}
We now consider the first passage time $T(D_n(0),D_n(n\xi))$ for each $\ell_2$-unit vector $\xi \in \R^d$.
Note that for all $x \in D_n(0)$ and $y \in D_n(n\xi )$,
\begin{align}
 T(D_n(0),D_n(n\xi ))
 \leq T(x,y) \leq T(D_n(0),D_n(n\xi ))
 +J_n(\xi,\omega),
 \label{eq:zhang_pas}
\end{align}
where $J_n(\xi,\omega)$ is the sum of $\omega (e)$
over all edges included in $D_n(0) \cup D_n(n\xi )$.

The following lemma is a minor modification of Lemma~8 and (3.23) in \cite{Zha10}.

\begin{lem}\label{lem:zhang}
We can choose $\kappa$ satisfying that,
for each $\ell_2$-unit vector $\xi\in\R^d$, there exist constants $\tilde{C}_1,\tilde{C}_2>0$
(which depend only on the law $\nu$, $\,d,\,\alpha,\,\delta$ and $\kappa$)
such that
\begin{align}
 P \bigl( T(D_n(0),D_n(n\xi ))\not= T_\sigma (D_n(0),D_n (n\xi )) \bigr)
 \leq \tilde{C}_1\exp \{ -\tilde{C}_2n^\delta \},
 \label{eq:zhang1}
\end{align}
and, for all $u>0$,
\begin{align}
\begin{split}
 &P\bigl( |T_\sigma (D_n(0),D_n(n\xi ))-E[T_\sigma (D_n(0),D_n(n\xi ))]|
               \geq u n^{1/2+3\delta}\bigr)\\
 &\leq \tilde{C}_1\exp \{ -\tilde{C}_2u^2n^\delta \}.
\end{split}
\label{eq:zhang2}
\end{align}
\end{lem}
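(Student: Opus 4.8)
I will establish \eqref{eq:zhang2} first and then deduce \eqref{eq:zhang1} from it, following the truncation strategy of \cite{Zha10} (itself based on \cite{Kes93}). Shrinking $\kappa$ once more if necessary, I may assume that, besides $P(\omega(e)<\kappa)$ lying below the critical probability for bond percolation on $\Z^d$, the quantity $2d\,P(\omega(e)>\kappa^{-1})$ — an upper bound for the probability that a given site is unhealthy — lies below the critical probability for $*$-connected site percolation on $\Z^d$; this is legitimate since $P(\omega(e)>\kappa^{-1})\to0$ as $\kappa\to0$. Standard subcritical estimates then give constants with $P(|\mathcal{C}_-(x)|\ge k)\vee P(|\mathcal{C}_+(x)|\ge k)\le Ce^{-ck}$ for all $x\in\Z^d$ and $k\ge1$, uniformly in $x$ and $\xi$.

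\emph{Proof of \eqref{eq:zhang2}.} The crucial point is that, although $\sigma$ is unbounded in case~(3), a $\sigma$-geodesic never has to pay a large case-(3) weight. Such an edge is a heavy edge both of whose endpoints lie in an unhealthy $*$-cluster $\mathcal{C}$ with fewer than $n^\delta$ vertices, and the $\Z^d$-exterior boundary of $\mathcal{C}$ consists solely of healthy sites (a neighbour of $\mathcal{C}$ that were unhealthy would itself belong to $\mathcal{C}$). Hence the stretch of the geodesic meeting $\mathcal{C}$ can be replaced by a detour along that boundary, which uses only edges of weight at most $\kappa^{-1}$ and has at most $2d\,n^\delta$ edges; thus any $\sigma$-geodesic can be modified, without increasing its passage time, so as to avoid every edge with $\sigma(e)>M:=2d\kappa^{-1}n^\delta$, whence
\[
 T_\sigma(D_n(0),D_n(n\xi))=T_{\sigma\wedge M}(D_n(0),D_n(n\xi)),
\]
and the concentration may be proved for the \emph{bounded} weights $\sigma\wedge M$. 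For these, comparison with a lattice path of length at most $C_dn$ joining $0$ to $[n\xi]$ gives $E[T_{\sigma\wedge M}(D_n(0),D_n(n\xi))]\le Cn$ (using $\int_{\kappa^{-1}}^\infty x\,\nu(dx)<\infty$, which holds because $\alpha>1$), and the same comparison together with Hoeffding's inequality gives $T_{\sigma\wedge M}(D_n(0),D_n(n\xi))\le Cn$ off an event of probability at most $e^{-cn^{1-2\delta}}$. On that event any $\sigma\wedge M$-geodesic has at most $Cn^{1+\delta}$ edges, since edges carrying weight at least $\kappa$ number at most $T_{\sigma\wedge M}/\kappa$, distinct excursions into $\omega$-bad clusters are separated by such an edge, and every visited bad cluster has fewer than $n^\delta$ vertices by construction. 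Finally, resampling a single weight $\omega(e)$ alters $\sigma\wedge M$ only on the at most $Cn^\delta$ edges of the bad or unhealthy cluster of an endpoint of $e$, and only when that cluster has fewer than $n^\delta$ vertices, so it changes $T_{\sigma\wedge M}(D_n(0),D_n(n\xi))$ by at most $Cn^{2\delta}$. Feeding the weight bound $M$ (of order $n^\delta$) and the high-probability geodesic-length bound (of order $n^{1+\delta}$) into the bounded-weight concentration bound used in \cite{Kes93,Zha10} (or a Talagrand-type inequality) yields a Gaussian deviation bound on a scale of order $n^{1/2+3\delta/2}\le n^{1/2+3\delta}$, and hence \eqref{eq:zhang2} after absorbing the $e^{-cn^{1-2\delta}}$ and $Ce^{-ck}$ errors into $\tilde C_1,\tilde C_2$.

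\emph{Proof of \eqref{eq:zhang1}.} Note that $\sigma$ and $\omega$ agree on every edge not attached to a bad $\Z^d$-cluster or an unhealthy $*$-cluster of at least $n^\delta$ vertices. Let $\Lambda$ be the box of side $C_8n^{1+\delta}$ centred on the segment $[0,n\xi]$. On the event that no such large cluster has a vertex in $\Lambda$ — of probability at least $1-C|\Lambda|e^{-cn^\delta}$ — and that $T_\sigma(D_n(0),D_n(n\xi))\le Cn$, which by \eqref{eq:zhang2} fails with probability at most $e^{-cn^{1-2\delta}}$, a $\sigma$-geodesic has at most $Cn^{1+\delta}\le C_8n^{1+\delta}/2$ edges and therefore stays inside $\Lambda$; since $\sigma=\omega$ there, this path also realises $T(D_n(0),D_n(n\xi))$, and the same length bound applied to the $\Lambda$-restricted $\omega$-geodesic — whose passage time is at most $T_\sigma(D_n(0),D_n(n\xi))\le Cn$ — shows it likewise does not reach $\partial\Lambda$. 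Hence $T=T_\sigma$ there, and a union bound over the two exceptional events gives \eqref{eq:zhang1} (the polynomial factor $|\Lambda|$ is absorbed and $n^{1-2\delta}\ge n^\delta$ for $\delta$ small).

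The step I expect to be the main obstacle — and where the argument of Lemma~8 of \cite{Zha10} must be corrected — is the simultaneous control of the two factors $n^\delta$ in the proof of \eqref{eq:zhang2}: the truncated weights remain unbounded in case~(3), and a single edge weight influences $\sigma$ over a whole cluster, so that a naive bounded-difference constant would be infinite. Both difficulties are resolved by the rerouting observation, but it requires checking carefully that the clusters one detours around, or that a resampling disturbs, are genuinely of size below $n^\delta$ (so the threshold defining the truncation has not already been exceeded) and that the detours run along healthy, hence light, sites.
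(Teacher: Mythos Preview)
Your route to \eqref{eq:zhang2} --- the rerouting identity $T_\sigma=T_{\sigma\wedge M}$, the polynomial geodesic-length bound for the truncated weights, and a bounded-weight concentration inequality --- is essentially the argument of \cite{Zha10} (with $n^\delta$ replacing $(\log n)^{1+\delta}$), and the paper simply refers to that source for this half of the lemma.

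The real divergence, and a genuine gap, is in your proof of \eqref{eq:zhang1}. On your good event you show that the $\sigma$-geodesic stays inside $\Lambda$ and that the \emph{$\Lambda$-restricted} $\omega$-geodesic does too; since $\sigma=\omega$ on edges inside $\Lambda$, this yields $T_\sigma=T_\omega^{\Lambda}$, the $\omega$-passage time over paths confined to $\Lambda$. It does \emph{not} yield $T_\sigma=T_\omega$: nothing you have written rules out that the \emph{unrestricted} $\omega$-geodesic leaves $\Lambda$, runs through a large bad $\Z^d$-cluster outside $\Lambda$ (where $\omega(e)<\kappa$ but $\sigma(e)=1$), and achieves a strictly smaller $\omega$-cost. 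Your ``same length bound'' does not apply to such a path, because that bound relies on every visited bad cluster having fewer than $n^\delta$ vertices, and you have only secured that inside $\Lambda$.

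This is precisely where the paper locates the oversight in Lemma~8 of \cite{Zha10} --- not in the bounded-difference step of \eqref{eq:zhang2}, as you conjectured. Zhang asserts that the $\omega$-geodesic for $T(D_n(0),D_n(n\xi))$ has at most $Ln$ edges except with probability $\le C\exp(-cn)$, citing Proposition~5.8 of \cite{Kes86}; but that proposition only controls long paths with \emph{small} passage time, and under (A2) alone the complementary event $\{T(D_n(0),D_n(n\xi))>aLn\}$ cannot be made exponentially small in $n$. The paper's fix is to relax the length threshold from $Ln$ to $\exp\{Ln^\delta\}$: then Chebyshev gives $P\bigl(T>c\exp\{Ln^\delta\}\bigr)\le C\,m_{\nu,1}\,n\exp\{-Ln^\delta\}$, and together with Proposition~5.8 this yields $P\bigl(\#\gamma_\omega>\exp\{Ln^\delta\}\bigr)\le C\exp\{-cn^\delta\}$. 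On the complement the $\omega$-geodesic lies in the box $[-\exp\{Ln^\delta\},\exp\{Ln^\delta\}]^d$, and a union bound over its $O(\exp\{dLn^\delta\})$ edges against the subcritical cluster tail $e^{-cn^\delta}$ finishes for $L$ small enough. In short, the paper confines the $\omega$-geodesic to an \emph{exponentially} large box using only a first-moment estimate; your polynomial box $\Lambda$ is too small to trap the $\omega$-geodesic without an additional argument, and the step you glossed over is exactly the one that needed repair.
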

\begin{proof}
We replace the component $(\log n)^{1+\delta}$ appearing in (1.10) of \cite{Zha10}
with $n^\delta$.
Then, the proofs of \eqref{eq:zhang1} and \eqref{eq:zhang2} follow from
the same strategy taken in \cite[Sections~2 and 3]{Zha10}, and we do not repeat it here.
As mentioned in Subsection~\ref{subsect:org},
an oversight is contained in the proof of Lemma~8 in \cite{Zha10}
and let us present a way to fix it.
In the beginning of its proof, the following claim is stated:
\begin{quote}
By Proposition~5.8 in \cite{Kes86_lect}, with a probability larger than $1-C_1\exp(-C_2n)$,
there exists an optimal path $\gamma$ for $T(D_n(0),D_n(nu))$ with $\# \gamma \leq Ln$.
\end{quote}
Because we now only assume $m_{\nu,\alpha}<\infty$,
this does not directly follow from Proposition~5.8 in \cite{Kes86_lect}.
To fix this problem, we replace the phrase ``$\# \gamma\leq Ln$''
with ``$\# \gamma \leq \exp\{Ln^\delta\}$''.
Let
\begin{align*}
 A_n:=
 \bigl\{
  \text{any optimal path $\gamma$ for $T(D_n(0),D_n(n\xi))$ satisfies $\# \gamma>\exp \{ Ln^\delta \}$}
 \bigr\}.
\end{align*}
Proposition~5.8 in \cite{Kes86_lect} then shows that
there are constants $C_1,\,C_2$ and $C_3$ such that
\begin{align*}
 &P\Bigl(
  \text{$\exists$ a path $\gamma$ from $0$ with $\# \gamma \geq \exp \{ Ln^\delta \}$
        but $T(\gamma)\leq C_1\exp\{Ln^\delta\}$} \Bigr)\\
 &\leq C_2\exp \bigl\{ -C_3\exp\{Ln^\delta\} \bigr\}.
\end{align*}
Chebyshev's inequality hence implies
\begin{align}\label{eq:fix}
\begin{split}
 P(A_n)
 &\leq C_2(\# D_n(0))\exp \bigl\{ -C_3\exp\{Ln^\delta\} \bigr\}\\
 &\quad
       +P\bigl( T(D_n(0),D_n(n\xi)) >C_1\exp\{Ln^\delta\} \bigr)\\
 &\leq C_2(\# D_n(0))\exp \bigl\{ -C_3\exp\{Ln^\delta\} \bigr\}
       +C_1^{-1}m_{\nu,1}n\exp\{-Ln^\delta\}\\
 &\leq C_4 \exp\{-C_5n^\delta\}
\end{split}
\end{align}
for some constants $C_4$ and $C_5$.
If
\begin{align*}
 T(D_n(0),D_n(n\xi)) \not= T_\sigma (D_n(0),D_n(n\xi)),
\end{align*}
then we have an edge $e \in \gamma$ satisfying
that $\# \mathcal{C}_-(v_e)>n^\delta$ or $\# \mathcal{C}_+(v_e)>n^\delta$,
where $v_e$ is an endpoint of the edge $e$.
Note that if $e \in \gamma$ with $\# \gamma \leq \exp\{Ln^\delta\}$,
then $v_e \in [-\exp\{Ln^\delta\},\exp\{Ln^\delta\}]^d$ holds.
Therefore, we have
\begin{align*}
 &P \bigl( T(D_n(0),D_n(n\xi ))\not= T_\sigma (D_n(0),D_n (n\xi )) \bigr)\\
 &\leq P(A_n)
       +\sum_{e \in [-\exp\{Ln^\delta\},\exp\{Ln^\delta\}]^d}
       P \bigl(
       \# \mathcal{C}_-(v_e)>n^\delta \text{ or } \# \mathcal{C}_+(v_e)>n^\delta
       \bigr).
\end{align*}
By the choice of $\kappa$, Theorem~6.1 of \cite{Gri99_book} implies that
there are constants $C_6$ and $C_7$ such that
the second term on the right-hand side is bounded above by
\begin{align*}
 \sum_{e \in [-\exp\{Ln^\delta\},\exp\{Ln^\delta\}]^d} 2\exp\{-C_6n^\delta \}
 \leq C_7 \exp\{dLn^\delta -C_6n^\delta \}.
\end{align*}
This, together with \eqref{eq:fix}, gives \eqref{eq:zhang1} for sufficiently small $L$.
\end{proof}

We need the following lemma to estimate the difference between
the expectations of $T$ and $T_\sigma$.

\begin{lem}\label{cor:zhang_cor}
For each $\ell_2$-unit vector $\xi \in \R^d$
there exist constants $\tilde{C}_3,\tilde{C}_4>0$
(which depend only on $\nu$, $d,\,\alpha,\,\delta$ and $\kappa$) such that
\begin{align*}
 \bigl| E[T(D_n(0),D_n(n\xi ))]-E[T_\sigma (D_n(0),D_n(n\xi ))] \bigr|
 \leq \tilde{C}_3n\exp \{ -\tilde{C}_4n^\delta \}.
\end{align*}
\end{lem}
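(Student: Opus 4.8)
The plan is to compare the two passage times only on the (exponentially unlikely) event where they differ, and to control the expectation over that event by Hölder's inequality, exploiting that (A2) supplies a strict power exponent $\alpha>1$ rather than an exponential moment. Write $\mathcal{A}_n:=\{T(D_n(0),D_n(n\xi))\neq T_\sigma(D_n(0),D_n(n\xi))\}$. Since $T(D_n(0),D_n(n\xi))=T_\sigma(D_n(0),D_n(n\xi))$ on $\mathcal{A}_n^c$, once we know both expectations are finite (which will follow from the bound below),
\begin{align*}
 \bigl|E[T(D_n(0),D_n(n\xi))]-E[T_\sigma(D_n(0),D_n(n\xi))]\bigr|
 \le E\bigl[\,|T(D_n(0),D_n(n\xi))-T_\sigma(D_n(0),D_n(n\xi))|\,\mathbf{1}_{\mathcal{A}_n}\bigr],
\end{align*}
and by \eqref{eq:zhang1} of Lemma~\ref{lem:zhang} we have $P(\mathcal{A}_n)\le\tilde{C}_1\exp\{-\tilde{C}_2 n^\delta\}$.

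Next I would record a crude pointwise bound on $|T-T_\sigma|$. Fix the shortest nearest-neighbor lattice path $\gamma_n$ from the vertex $0$ (which lies in $D_n(0)$) to the vertex $[n\xi]$ (which lies in $D_n(n\xi)$); since $|\xi|_\infty\le|\xi|=1$, it has at most $d(n+1)\le 2dn$ edges. As $T(D_n(0),D_n(n\xi))$ and $T_\sigma(D_n(0),D_n(n\xi))$ are infima of path sums over paths joining the two boxes, and $\sigma(e)\in\{\omega(e),1\}$ so that $0\le\sigma(e)\le\omega(e)+1$, we get
\begin{align*}
 0\le T(D_n(0),D_n(n\xi))\le\sum_{e\in\gamma_n}\omega(e),
 \qquad
 0\le T_\sigma(D_n(0),D_n(n\xi))\le\sum_{e\in\gamma_n}(\omega(e)+1),
\end{align*}
hence $|T(D_n(0),D_n(n\xi))-T_\sigma(D_n(0),D_n(n\xi))|\le W_n:=\sum_{e\in\gamma_n}(\omega(e)+1)$. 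By Minkowski's inequality and (A2),
\begin{align*}
 \|W_n\|_{L^\alpha}\le\sum_{e\in\gamma_n}\bigl\|\omega(e)+1\bigr\|_{L^\alpha}\le 2dn\,\bigl(m_{\nu,\alpha}^{1/\alpha}+1\bigr)<\infty,
\end{align*}
so $W_n$, and therefore $T(D_n(0),D_n(n\xi))$ and $T_\sigma(D_n(0),D_n(n\xi))$, are integrable.

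Finally, combining these with Hölder's inequality applied with the conjugate exponents $\alpha$ and $\alpha/(\alpha-1)$,
\begin{align*}
 \bigl|E[T(D_n(0),D_n(n\xi))]-E[T_\sigma(D_n(0),D_n(n\xi))]\bigr|
 &\le E[W_n\mathbf{1}_{\mathcal{A}_n}]
 \le\|W_n\|_{L^\alpha}\,P(\mathcal{A}_n)^{1-1/\alpha}\\
 &\le 2dn\,\bigl(m_{\nu,\alpha}^{1/\alpha}+1\bigr)\bigl(\tilde{C}_1\exp\{-\tilde{C}_2 n^\delta\}\bigr)^{1-1/\alpha}.
\end{align*}
This is of the asserted form with $\tilde{C}_4:=\tilde{C}_2(1-1/\alpha)>0$ and $\tilde{C}_3:=2d\bigl(m_{\nu,\alpha}^{1/\alpha}+1\bigr)\tilde{C}_1^{1-1/\alpha}$, which depend only on $\nu,\,d,\,\alpha,\,\delta$ and $\kappa$. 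The one genuinely delicate point is precisely this last step: Lemma~\ref{lem:zhang} only gives the probability of the disagreement event and (A2) only a power moment, so the crude path bound on $|T-T_\sigma|$ must have a controlled $L^\alpha$-norm — which is why the reference path is chosen of length $O(n)$ and why it is essential that $\alpha>1$ strictly (so that a positive power $1-1/\alpha$ of the exponentially small probability survives), exactly as in the absence of an exponential moment one cannot simply use a uniform bound on $|T-T_\sigma|$.
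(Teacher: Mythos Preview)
Your proof is correct and follows essentially the same approach as the paper: both use that $T=T_\sigma$ off the event $\Gamma=\mathcal{A}_n$, invoke \eqref{eq:zhang1} for $P(\Gamma)$, and apply H\"older's inequality with exponents $\alpha$ and $\alpha/(\alpha-1)$ together with a crude $O(n)$-length reference path to control the $L^\alpha$-norm. The only cosmetic difference is that the paper bounds $E[T\,\1{\Gamma}]$ and $E[T_\sigma\,\1{\Gamma}]$ separately (yielding the two inequalities), whereas you bound $|T-T_\sigma|\le W_n$ in one stroke; the resulting constants $\tilde C_3,\tilde C_4$ are the same up to harmless numerical factors.
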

\begin{proof}
Let $\Gamma:=\{T(D_n(0),D_n(n\xi ))\not= T_\sigma (D_n(0),D_n (n\xi ))\}$,
and set
\begin{align*}
 C_8:=\sqrt{d}\tilde{C}_1^{(\alpha-1)/\alpha} m_{\nu,\alpha}^{1/\alpha},\qquad
 C_9:=\tilde{C}_2(\alpha-1) /\alpha.
\end{align*}
Using H\"older's  inequality and \eqref{eq:zhang1}, we have
\begin{align*}
 E\left[T(D_n(0),D_n(n\xi ))\1{\Gamma} \right]
 &\leq \sqrt{d}\tilde{C}_1^{(\alpha -1)/\alpha}m_{\nu,\alpha}^{1/\alpha}
           n \exp \{  -n^\delta \tilde{C}_2(\alpha -1)/\alpha \}\\
 &= C_8n\exp \{ -C_9n^\delta \}.
\end{align*}
Therefore,
\begin{align*}
 E[T_\sigma (D_n(0),D_n(n\xi ))]+C_8n\exp \{ -C_9n^\delta \}
 \geq E[ T(D_n(0),D_n(n\xi ))].
\end{align*}
Similarly, since $\sigma(e) \leq \omega(e)+1$ holds for all $e \in \mathcal{E}$,
\begin{align*}
 E[T(D_n(0),D_n(n\xi ))]+C_{10}n\exp \{ -C_{11}n^\delta \}
 \geq E[ T_\sigma (D_n(0),D_n(n\xi ))]
\end{align*}
for some constants $C_{10}$ and $C_{11}$.
Thus, Lemma~\ref{cor:zhang_cor} follows
by choosing	$\tilde{C}_3:=C_8 \vee C_{10}$ and $\tilde{C}_4:=C_9 \wedge C_{11}$.
\end{proof}

In the next section, $\tilde{C}_i$'s are always constants appearing in this section.

\section{Proof of Theorem~\ref{thm:rate}} \label{sec:pf_rate}

\subsection{Kesten's approach}\label{subsect:kesap}
Let us first prepare some notations.
Fix an $\ell_2$-unit vector $\xi \in \R^d$,
and for $M \in \N$ let $U_1,\dots,U_K$ be all the vectors
with integer components and $\|U_k\|_\infty=M,\,1 \leq k \leq K$.
Define
\begin{align*}
 \Lambda (M,n) :=\min \biggl\{
 \sum_{k=1}^K p(k)E[T(0,U_k)] \biggr\} -n\mu (\xi ),
\end{align*}
where the minimum is over all choices of $p(k) \in \N_0$
such that
\begin{align}\label{eq:choice}
 \biggl\| \sum_{k=1}^K p(k)U_k-n\xi \biggr\|_\infty \leq M.
\end{align}

In \cite[pages~317--327]{Kes93}, the proof of \eqref{eq:kesnon} is composed of three steps.
The main parts are Steps~1 and 2 of \cite[pages~317--326]{Kes93}, so that
we will explain only these steps here.
Step~3 in \cite[pages~326--327]{Kes93} will be explained in the proof of Theorem~\ref{thm:rate}.

In Step~1 of \cite[page 317]{Kes93}, Kesten shows that
there exists a constant $C_1>m_{\nu,1}$ such that
for $M \in [n^{1/(d+1)},n]$ and $l \geq 1$,
\begin{align}
 l \Lambda (M,n)-C_1l M^{1/d}n^{(d-1)/d}
 \leq \Lambda (M,l n)
 \leq C_1l n.
 \label{eq:kesten}
\end{align}
His proof works under assumption \eqref{eq:lowmnt}.

In Step~2 of \cite[page~321]{Kes93},
it is proved that there are constants $c,c',C,C'>0$ such that
for large $n$ and $M$ as above and for $l \geq 2$,
\begin{align}\label{eq:kesgoal}
\begin{split}
 &P\biggl( a_{0,l n}(\xi) \leq l n\mu (\xi )+\frac{l}{2}\Lambda (M,n) \biggr)\\
 &\leq ce^{-ln}
       +\exp \biggl\{ c'\frac{ln}{M}\log M+ClM^{(2-d)/(2d)}n^{(d-1)/d}
       -C'\frac{l \Lambda (M,n)^2}{nM^{1/2}} \biggr\}.
\end{split}
\end{align}
We have to modify this estimate under assumption \eqref{eq:lowmnt}.
In particular, \eqref{eq:exp_ass} is required for bounds \eqref{eq:imp1} and \eqref{eq:imp2} below.
Thus, if \eqref{eq:lowmnt} is assumed instead of \eqref{eq:exp_ass},
then we must get a bound similar to \eqref{eq:kesgoal}
without \eqref{eq:imp2} and \eqref{eq:imp1}.
In fact, this is possible by replacing \eqref{eq:replaced} with Lemma~\ref{lem:step2},
which is proved in Subsection~\ref{subsect:fix}.

Let us give a sketch of Kesten's proof of \eqref{eq:kesgoal}.
Let $\gamma:=(v_0,v_1,\dots,v_p)$ be any self-avoiding nearest neighbor path
from $v_0=0$ to $v_p=[l n\xi]$ with passage time $T(\gamma) \leq l n\mu(\xi)+(l/2)\Lambda (M,n)$.
In addition, define the indices $\tau_0:=0$ and
\begin{align*}
 \tau_{i+1}:=\min \{ k \in (\tau_i ,p];\|v_k-v_{\tau_i}\|_\infty =M \},\qquad i \geq 0,
\end{align*}
with the convention $\min \emptyset =\infty$.
Set $Q:=\max\{ i \geq 0;\tau_i<\infty\} $
and $a_i:=v_{\tau_i}$ for $i \in[0,Q]$.
By definition of $Q$, we have
\begin{align*}
 \|v_k-v_{\tau_Q}\|_\infty <M,\qquad \tau_Q<k\leq p,
\end{align*}
and in particular,
\begin{align}
 \| v_{\tau_Q}-l n\xi\|_\infty
 \leq \| v_{\tau_Q}-v_p \|_\infty +\| [l n\xi ]-l n\xi \|_\infty
 \leq M.
 \label{eq:inbox}
\end{align}
Moreover,
\begin{align}
 \| a_i-a_{i-1} \|_\infty
 = \| v_{\tau_i}-v_{\tau_{i-1}} \|_\infty
 = M,\qquad 1 \leq i \leq Q,
 \label{eq:bdbox}
\end{align}
so that $a_i-a_{i-1}$ is one of the $U_k$'s
(which appear in the beginning of this section).
It holds from \cite[pages~322--323]{Kes93} that there exists constants $C_2,C_3$ such that
\begin{align}
 P(Q \geq C_2ln/M) \leq C_3e^{-ln}.
 \label{eq:path_length}
\end{align}
We now fix $Q<C_2l n/M$
and $a_1,\dots,a_Q$ satisfying \eqref{eq:inbox} and \eqref{eq:bdbox}.
We denote by $p(k)$ the number of $i \in [1,Q]$ with $a_i-a_{i-1}=U_k$.
The $p(k)$'s are fixed at the moment.
Then, (3.28)--(3.32) of \cite[page~323]{Kes93} enable us to show that for any $\beta \geq 0$,
\begin{align}
\begin{split}
 &P \biggl( \begin{minipage}{24em}
             $\exists$ a self-avoiding path $\gamma$ with
             $v_{\tau_i}=a_i,\,1 \leq i \leq Q$,\\
             and satisfying \eqref{eq:inbox}
             and $T(\gamma) \leq l n\mu(\xi)+(l/2)\Lambda (M,n)$
            \end{minipage}
            \biggr)\\
 &\leq \exp \biggl\{ -\frac{\beta l}{2}\Lambda (M,n) +\beta C_1 l M^{1/d}n^{(d-1)/d} \biggr\}\\
 &\quad \times \prod_{k=1}^K E \bigl[ \exp \bigl\{-\beta (T(0,U_k)-E[T(0,U_k)]) \bigr\} \bigr]^{p(k)}.
\end{split}
\label{eq:path}
\end{align}
It remains to estimate the product in \eqref{eq:path}.
Note that $\sum_{k=1}^K p(k)=Q$, which is the number of $(a_i-a_{i-1})$'s, and
\begin{align}
\begin{split}
 &E \bigl[ \exp \bigl\{-\beta (T(0,U_k)-E[T(0,U_k)]) \bigr\} \bigr]\\
 &\leq \exp \biggl\{ C_4 \frac{\beta l}{Q}\Lambda (M,n) \biggr\}\\
 &\quad +\exp \{ \beta E[T(0,U_k)] \}
        P \biggl( T(0,U_k)-E[T(0,U_k)] \leq -\frac{C_4l}{Q}\Lambda (M,n) \biggr),
\end{split}
\label{eq:approach}
\end{align}
where $C_4$ will be chosen such that for large $M$ and for $n \geq M$ and $l \geq 2d$,
\begin{align}
 \frac{C_4l}{Q}\Lambda (M,n)
 \leq \frac{d}{2}Mm_{\nu,1}
 \quad \textnormal{and}\quad C_4 \leq \frac{1}{4}.
 \label{eq:constbd}
\end{align}
The argument below (3.34) of \cite{Kes93}
guarantees the existence of such a $C_4$.
In particular, for $n \geq M$ and $l \geq 2d$,
\begin{align}
 Q \geq \frac{l n}{dM}-1 \geq \frac{ln}{2dM}.
 \label{eq:q_lower}
\end{align}
We shall estimate the last probability in \eqref{eq:approach}.
Set $\eta:=U_k/\|U_k\|_2$ and $m:=\lfloor \|U_k\|_2 \rfloor \in [M,dM]$.
Note that $\|[m\eta] -U_k\|_\infty \leq 2$.
Assumption \eqref{eq:exp_ass} guarantees that there exist constants $c,C,C'>0$ such that for $t \geq 0$,
\begin{align}\label{eq:imp2}
 P\bigl( \bigl| T(0,[m\eta ])-E[T(0,[m\eta ])] \bigr| \geq t\sqrt{m} \bigr)
 \leq Ce^{-C't},
\end{align}
and for $t \leq cm$,
\begin{align}\label{eq:imp1}
 P \bigl( \bigl| T(0,[m\eta ])-E[T(0,[m\eta ])]-T(0,U_k)+E[T(0,U_k)] \bigr| \geq t \bigr)
 \leq Ce^{-C't},
\end{align}
which are (2.49) and (3.36) of \cite{Kes93}, respectively.
By choosing $t$ suitably (see (3.37) of \cite[page~325]{Kes93} for details), these estimates show
that for some constants $C_5,C_6>0$,
\begin{align}\label{eq:replaced}
\begin{split}
 &P \biggl( T(0,U_k)-E[T(0,U_k)] \leq -\frac{C_4l}{Q}\Lambda (M,n) \biggr)\\
 &\leq C_5\exp \biggl\{ -\frac{C_6}{QM^{1/2}}l\Lambda (M,n) \biggr\}.
\end{split}
\end{align}
Therefore, the right-hand side of \eqref{eq:approach} is at most
\begin{align*}
 \exp\left\{ C_4\frac{\beta l}{Q}\Lambda(M,n)\right\}
 +C_7\exp\biggl\{\beta dMm_{\nu,1}-\frac{C_6}{QM^{1/2}} l\Lambda(M,n) \biggr\}.
\end{align*}
for some constant $C_7$.
Choose $\beta$ such that the two exponents become equal, so that
the left-hand side of \eqref{eq:approach} is smaller than
\begin{align*}
 C_8^Q
 \exp \biggl\{ C_9l M^{\delta -(d-2)/(2d)}n^{(d-1)/d}
               -C_{10}\frac{l^2\Lambda(M,n)^2}{QM^{3/2}}\biggr\}.
\end{align*}
for some constants $C_8,C_9,C_{10}$.
Hence \eqref{eq:kesgoal} follows by
summing the left-hand side of \eqref{eq:path} over all possible values of $Q$ and $a_1,\dots,a_Q$.
(See the first paragraph of \cite[page 326]{Kes93} for details.)

With these observations, under \eqref{eq:lowmnt} we must estimate the last probability in \eqref{eq:approach}
without \eqref{eq:imp2} and \eqref{eq:imp1}.
In fact, this is possible as follows.
(See Subsection~3.2 for the proof.)

\begin{lem}\label{lem:step2}
Assume \eqref{eq:p_c} and \eqref{eq:lowmnt}.
For $\delta \leq1/6$
there exist constants $C_{11},C_{12}>0$ such that,
for all large $n$, if $\Lambda (M,n) \geq C_{11}nM^{-(1-d\delta )}$ and $Q<C_2ln/M$, then
\begin{align*}
 &P\biggl( T(0,U_k)-E[T(0,U_k)]\leq -\frac{C_4l}{Q}\Lambda(M,n)\biggr)\\
 &\leq 2\tilde{C}_1
       \exp\biggl\{ -C_{12}M^{-(2-\delta )} \biggl( \frac{l}{Q} \biggr)^2 \Lambda (M,n)^2 \biggr\}.
\end{align*}
\end{lem}

\subsection{Proofs of Lemma~\ref{lem:step2} and Theorem~\ref{thm:rate}}\label{subsect:fix}
Let us first give the proof of Lemma~\ref{lem:step2}.

\begin{proof}[\bf Proof of Lemma~\ref{lem:step2}]
Recall that $\eta:=U_k/\|U_k\|_2$ and $m:=\lfloor \|U_k\|_2 \rfloor \in [M,dM]$.
Note that
$\|m\eta -U_k\|_\infty \leq 1$
and $0 \in D_{m}(0)$ and $U_k \in D_{m}(m\eta)$
hold for large $m$.
By \eqref{eq:zhang_pas},
\begin{align*}
 T(D_{m}(0),D_{m}(m\eta))
 \leq T(0,U_k)
 \leq T(D_{m}(0),D_{m}(m\eta))
      +J_m(\xi,\omega).
\end{align*}
This, together with Lemma~\ref{cor:zhang_cor}, gives
\begin{align*}
 E[T(0,U_k)]
 \leq E[T_\sigma (D_{m}(0),D_{m}(m\eta ))]
         +\tilde{C}_3m\exp \{ -\tilde{C}_4m^\delta \} +C_{13}m^{d\delta}
\end{align*}
for some constant $C_{13}$.
Therefore,
\begin{align}
\begin{split}
 &P\biggl( T(0,U_k)-E[T(0,U_k)]\leq -\frac{C_4l}{Q}\Lambda(M,n)\biggr)\\
 &\leq P\biggl( T(D_{m}(0),D_{m}(m\eta))-E[T_\sigma (D_{m}(0),D_{m}(m\eta))]\\
 &\qquad\quad
               \leq -\frac{C_4l}{Q}\Lambda(M,n)
                    +\tilde{C}_3m\exp\{-\tilde{C}_4m^\delta\}
                    +C_{13}m^{d\delta}\biggr).
\end{split}
\label{eq:prob_main}
\end{align}
Take $C_{11}:=4d^{d\delta}C_2(\tilde{C}_3 \vee C_{13})/C_4$.
Since we have assumed $\Lambda (M,n) \geq C_{11}nM^{-(1-d\delta)}$ and $Q<C_2ln/M$,
the choice of $n,\,M$ and $m$ implies for all large $n$,
\begin{align*}
 \frac{C_4l}{2Q}\Lambda (M,n)
 \geq \tilde{C}_3m\exp \{-\tilde{C}_4m^\delta \} +C_{13}m^{d\delta}.
\end{align*}
It follows that the right-hand side of \eqref{eq:prob_main} is smaller than
\begin{align*}
 P\biggl( T(D_{m}(0),D_{m}(m\eta))-E[T_\sigma (D_{m}(0),D_{m}(m\eta ))]
            \leq -\frac{C_4l}{2Q}\Lambda(M,n) \biggr).
\end{align*}
Thanks to \eqref{eq:zhang1} and \eqref{eq:zhang2},
this is bounded from above by
\begin{align*}
 &\tilde{C}_1\exp\{ -\tilde{C}_2m^\delta \}\\
 &+P\biggl( |T_\sigma (D_{m}(0),D_{m}(m\eta))-E[T_\sigma (D_{m}(0),D_{m}(m\eta))]|
                 \geq \frac{C_4l}{2Q}\Lambda(M,n) \biggr)\\
 &\leq \tilde{C}_1\exp\{ -\tilde{C}_2m^\delta \}
       +\tilde{C}_1\exp\biggl\{ -\biggl( \frac{\tilde{C}_2C_4^2}{4} \biggr)
                              \frac{(l /Q)^2\Lambda(M,n)^2}{m^{1+5\delta}} \biggr\}.
\end{align*}
By \eqref{eq:constbd} and $\delta \leq 1/6$, there exists a constant $C_{12}>0$
such that the right-hand side is smaller than
\begin{align*}
 2\tilde{C}_1
 \exp\biggl\{ -C_{12}M^{-(2-\delta )} \biggl( \frac{l}{Q} \biggr)^2 \Lambda (M,n)^2 \biggr\}.
\end{align*}
Hence the proof is complete.
\end{proof}

Finally, we prove Theorem~\ref{thm:rate}.

\begin{proof}[\bf Proof of Theorem~\ref{thm:rate}]
Let us first show that
there exist constants $C_{14},C_{15},C_{16}>0$ such that,
for all large $n$, if $\Lambda (M,n) \geq C_{11}nM^{-(1-d\delta )}$, then
\begin{align}
\begin{split}
 &P\biggl( a_{0,l n}(\xi) \leq l n\mu (\xi )+\frac{l}{2}\Lambda (M,n) \biggr)\\
 &\leq C_{14}e^{-l n}
           +\exp \biggl\{ C_{15}\frac{ln}{M}\log M+C_{15}l M^{\delta -(d-1)/d}n^{(d-1)/d}
                                  -C_{16}\frac{l \Lambda (M,n)^3}{n^2M^{1-\delta}} \biggr\},
\end{split}
\label{eq:step2}
\end{align}
which is the counterpart of \eqref{eq:kesgoal} under \eqref{eq:lowmnt}.
From Lemma~\ref{lem:step2}, the right-hand side of \eqref{eq:approach} is at most
\begin{align*}
 \exp\left\{ C_4\frac{\beta l}{Q}\Lambda(M,n)\right\}
 +2\tilde{C}_1
      \exp\biggl\{\beta dMm_{\nu,1}-C_{12}M^{-(2-\delta )}
      \biggl( \frac{l}{Q} \biggr)^2 \Lambda(M,n)^2 \biggr\}.
\end{align*}
Finally, we choose $\beta$ such that the two exponents above here become equal, i.e.,
\begin{align*}
 \beta
 = C_{12}M^{-(2-\delta )}
      \biggl( \frac{l}{Q} \biggr)^2 \Lambda(M,n)^2
      \biggl( dMm_{\nu,1}-\frac{C_4l}{Q}\Lambda(M,n) \biggr)^{-1}.
\end{align*}
In particular, by \eqref{eq:constbd},
\begin{align*}
 \beta
 \leq C_{12}M^{-(2-\delta )}
         \biggl( \frac{l}{Q} \biggr)^2 \Lambda(M,n)^2
         \biggl( \frac{d}{2}Mm_{\nu,1} \biggr)^{-1}
 \leq C_{17}M^{-(1-\delta)}
\end{align*}
for some constant $C_{17}$.
By \eqref{eq:constbd} and \eqref{eq:q_lower},
the left-hand side of \eqref{eq:path} is smaller than
\begin{align*}
 &\exp\{\beta C_1l M^{1/d}n^{(d-1)/d}\}
  \times\prod_{k=1}^K
  \biggl( (2\tilde{C}_1+1)\exp\biggl\{\biggl( C_4-\half \biggr)
  \frac{\beta l}{Q}\Lambda(M,n) \biggr\}\biggr)^{p(k)}\\
 &\leq (2\tilde{C}_1+1)^{C_2l n/M}
           \exp \biggl\{ C_1C_{17}l M^{\delta -(d-1)/d}n^{(d-1)/d}
                               -C_{18}\frac{l\Lambda(M,n)^3}{n^2M^{1-\delta}}\biggr\}
\end{align*}
for some constant $C_{18}$.
Therefore,
bound \eqref{eq:step2} follows by
summing the left-hand side of \eqref{eq:path} over all possible values
of $Q$ and $a_1,\dots,a_Q$. 
See the first paragraph in \cite[page~326]{Kes93} for details.

We complete the proof of Theorem~\ref{thm:rate} following basically Step~3 of \cite[pages~326--327]{Kes93}.
Pick
\begin{align}
 \delta :=1/(d+4) .
 \label{eq:delta}
\end{align}
Here, note that $\delta <1/d$.
We first treat the case $\Lambda (M,n) \geq C_{11}nM^{-(1-d\delta )}$.
Choose
\begin{align}
 M:=\lfloor n^{1/(d\delta+1)} \rfloor.
 \label{eq:m}
\end{align}
If we have
\begin{align}\label{eq:contra}
 C_{16}\frac{l \Lambda (M,n)^3}{n^2M^{1-\delta}}
 >C_{15}\frac{ln}{M}\log M+C_{15}l M^{\delta -(d-1)/d}n^{(d-1)/d},
\end{align}
then by \eqref{eq:step2},
\begin{align*}
 \lim_{l \to \infty} P\left( a_{0,l n}(\xi) \leq l n\mu (\xi )+\frac{l}{2}\Lambda (M,n) \right)
 = 0.
\end{align*}
However, this contradicts to \eqref{eq:timeconst}, and \eqref{eq:contra} fails to hold.
This means that
\begin{align*}
 \Lambda (M,n)
 \leq C_{19}\left\{ nM^{-\delta /3}(\log M)^{1/3}+n^{1-1/(3d)}M^{1/(3d)} \right\}
\end{align*}
for some constant $C_{19}$.
By \eqref{eq:delta}, $\Lambda (M,n)$ is smaller than
\begin{align*}
 2C_{19}nM^{-\delta /3}(\log M)^{1/3}
 \leq C_{20}n^{1-1/(6d+12)}(\log n)^{1/3}
\end{align*}
for some constant $C_{20}$.
This, together with the definition of $\Lambda (M,n)$, enables us to
take $p(k) \geq 0$ satisfying \eqref{eq:choice} and
\begin{align*}
 \sum_{k=1}^\nu p(k)E[T(0,U_k)]
 \leq n \mu (\xi )+C_{20}n^{1-1/(6d+12)}(\log n)^{1/3}.
\end{align*}
Now set $\rho =\sum_{k=1}^\nu p(k)$
and let $u_1,\dots,u_\rho$ be the sites defined by
$u_i-u_{i-1}=U_k$ for $\sum_{j=1}^{k-1} p(j)<i\leq \sum_{j=1}^k p(j)$.
Note that $u_\rho =\sum_{k=1}^\nu p(k)U_k$.
Subadditivity of the first passage time gives
\begin{align*}
 E[a_{0,n}(\xi )]
 \leq \sum_{i=1}^\rho E[T(u_{i-1},u_i)]+E[T(u_\rho ,n\xi )].
\end{align*}
By the choice of $u_1,\dots,u_\rho$,
\begin{align*}
 \sum_{i=1}^\rho E[T(u_{i-1},u_i)]
 &= \sum_{k=1}^\nu p(k)E[T(0,U_k)]\\
 &\leq n\mu (\xi )+C_{20}n^{1-1/(6d+12)}(\log n)^{1/3}.
\end{align*}
In addition, by \eqref{eq:choice},
\begin{align*}
 E[T(u_\rho ,n\xi )]
 \leq d\|[n\xi ]-u_\rho \|_\infty E[\omega (0)]
 \leq d(M+1)E[\omega (0)],
\end{align*}
and \eqref{eq:rate} immediately follows in the case $\Lambda (M,n) \geq C_{11}nM^{-(1-d\delta )}$.

In the case $\Lambda (M,n)<C_{11}nM^{-(1-d\delta )}$,
the definition of $\Lambda(M,n)$ implies
\begin{align*}
 n\mu (\xi )+C_{11}nM^{-(1-d\delta )}
 &> \min \biggl\{
 \sum_{k=1}^K p(k)E[T(0,U_k)] \biggr\},
\end{align*}
where the minimum is taken over all choices of $p(k)$ satisfying \eqref{eq:choice}.
Subadditivity of the first passage time shows that
\begin{align*}
 \sum_{k=1}^K p(k)E[T(0,U_k)]
 &\geq \sum_{k=1}^K E \Biggl[ T \Biggl( \sum_{j=1}^{k-1}p(j)U_j, \sum_{j=1}^kp(j)U_j \Biggr) \Biggr]\\
 &\geq -d(M+1)m_{\nu,1}+E[a_{0,n}(\xi )].
\end{align*}
With these observations,
\begin{align*}
 E[a_{0,n}(\xi )]
 \leq n\mu (\xi )+C_{11}nM^{-(1-d\delta )}+d(M+1)m_{\nu,1}.
\end{align*}
This, together with \eqref{eq:delta} and \eqref{eq:m}, is bounded from above by
\begin{align*}
 n\mu (\xi )+(C_{11}+2dm_{\nu,1})n^{1-1/(d+2)}(\log n)^{1/3}.
\end{align*}
Since $n^{1-1/(6d+12)} \geq n^{1-1/(d+2)}$, \eqref{eq:rate} is valid in all cases.
\end{proof}

\paragraph{\bf Acknowledgments}
I am grateful to Michael Damron for discussions on this problem.
The author would also like to express his gratitude to the reviewer
for the careful reading of the manuscript.


\end{document}